\newtheorem{theorem}{Theorem}[section]
\theoremstyle{definition}
\newtheorem{definition}{Definition}[section]
\theoremstyle{remark}
\theoremstyle{proposition}
\newtheorem{proposition}{Proposition}[section]
\numberwithin{equation}{section}
\theoremstyle{corollary}
\theoremstyle{conjecture}
\begin{document}

\title{The Euler class in the Simplicial \\
de Rham Complex}
\author{Naoya Suzuki}
\date{}
\maketitle
\begin{abstract}
 We exhibit a cocycle in the simplicial de Rham complex
which represents the Euler class.
As an application, we construct a Lie algebra cocycle on $L\mathfrak{so}(4)$.
\end{abstract}

\section{Introduction}

For any Lie group $G$, we can define a simplicial manifold $\{ NG(*) \}$ and  a double complex ${\Omega}^{*} (NG(*)) $ on it.
In classical theory, it is well-known that the cohomology ring of the total complex
${\Omega}^{*} (NG) $ is isomorphic to $H^*(BG)$ where $BG$ is a classifying space of $G$, which is not a manifold in general \cite{Bot2} \cite{Dup2} \cite{Mos}.

 In \cite{Dup}, Dupont introduced another double complex $A^{*,*}(NG)$ on $NG$ such that the cohomology ring of its total complex $A^*(NG)$
is also isomorphic to $H^*(BG)$. He used it to construct a homomorphism from $I^* (G)$, the $G$-invariant polynomial ring over Lie algebra $\mathcal{G}$, to $H^*(BG)$.
By using Dupont's method,  in \cite{Suz} the author exhibited cocycles in ${\Omega}^{*} (NG) $ which represent the Chern characters.
In this paper, we will exhibit cocycles which represent the Euler classes.

Using a cocycle in ${\Omega}^{*} (NG) $, we can construct a cocycle in the local truncated complex  $[\sigma_{<p}\Omega^{*}_{\rm loc}({NG}) ]$ due to
Brylinski \cite{Bry}. Furthermore, we can obtain a Lie algebra cocycle of a free loop group $LG$.
Following Brylinski's idea, we will construct a Lie algebra $2$-cocycle on $L\mathfrak{so}(4)$ using a cocycle in $\Omega^4(NSO(4))$.

\section{Review of the universal Chern-Weil Theory}

In this section we recall the universal Chern-Weil theory following \cite{Dup2}.
For any Lie group $G$, we have simplicial manifolds $N\bar{G}$, $N{G}$ and simplicial $G$-bundle  $\gamma : N \bar{G} \rightarrow NG$
as follows:\\
\par
$N \bar{G} (q) = \overbrace{ G \times \cdots \times G }^{q+1 - times} \ni (g_1 , \cdots , g_{q+1} ) $ \\

\par
\medskip
$NG(q)  = \overbrace{G \times \cdots \times G }^{q-times}  \ni (h_1 , \cdots , h_q ) :$  \\
face operators \enspace ${\varepsilon}_{i} : NG(q) \rightarrow NG(q-1)  $
$$
{\varepsilon}_{i}(h_1 , \cdots , h_q )=\begin{cases}
(h_2 , \cdots , h_q )  &  i=0 \\
(h_1 , \cdots ,h_i h_{i+1} , \cdots , h_q )  &  i=1 , \cdots , q-1 \\
(h_1 , \cdots , h_{q-1} )  &  i=q.
\end{cases}
$$

\par
\medskip

We define $\gamma : N \bar{G} \rightarrow NG $ as $ \gamma (g_0 , \cdots , g_{q} ) = (g_0 {g_1}^{-1} , \cdots , g_{q-1} {g_{q}}^{-1} )$.\\
\par
For any simplicial manifold $X = \{ X_* \}$, we can associate a topological space $\parallel X \parallel $ 
called the fat realization. It is well-known that $\parallel \gamma \parallel$ is the universal bundle $EG \rightarrow BG$  \cite{Seg}. 

Now we introduce a double complex associated to a simplicial manifold.

\begin{definition}
For any simplicial manifold $ \{ X_* \}$ with face operators $\{ {\varepsilon}_* \}$, we define a double complex as follows:
$${\Omega}^{p,q} (X) := {\Omega}^{q} (X_p) $$
Derivatives are:
$$ d' := \sum _{i=0} ^{p+1} (-1)^{i} {\varepsilon}_{i} ^{*}  , \qquad  d'' := (-1)^{p} \times {\rm the \enspace exterior \enspace differential \enspace on \enspace }{ \Omega ^*(X_p) }. $$
\end{definition}
\bigskip

For $NG$ and $N \bar{G} $ the following holds \cite{Bot2} \cite{Dup2} \cite{Mos}.

\begin{theorem}
There exist ring isomorphisms 
$$H({\Omega}^{*} (N \bar{G})) \cong H^{*} (EG ), \qquad   H({\Omega}^{*} (NG))  \cong  H^{*} (BG ). $$
Here ${\Omega}^{*} (N\bar{G})$  and  ${\Omega}^{*} (N{G})$  mean the total complexes.
\end{theorem}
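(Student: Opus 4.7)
The plan is to prove the more general statement that for any simplicial manifold $X_*$ there is a ring isomorphism $H(\Omega^*(X)) \cong H^*(\|X\|;\mathbb{R})$, and then specialize to $X_* = N\bar{G}$ and $X_* = NG$, using the classical identifications $\|N\bar{G}\| = EG$ and $\|NG\| = BG$.

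First I would filter the double complex $\Omega^{*,*}(X)$ by the simplicial degree $p$, producing a spectral sequence whose $E_1^{p,q} = H^q_{dR}(X_p)$ and whose $d_1$ is the alternating sum $\sum_{i=0}^{p+1} (-1)^i \varepsilon_i^*$ induced by the face maps. On the topological side, the fat realization $\|X\|$ carries a natural skeletal filtration whose associated spectral sequence has $E_1^{p,q} = H^q(X_p;\mathbb{R})$ with the analogous differential, and converges to $H^{p+q}(\|X\|;\mathbb{R})$. Applying the ordinary de Rham theorem level-wise, say via integration of forms over smooth singular simplices into the double complex $C^{p,q}(X) = C^q_{\mathrm{sm}}(X_p;\mathbb{R})$, gives a map of double complexes that is an isomorphism on the $E_1$ page. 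The spectral sequence comparison theorem then yields the claimed isomorphism of total cohomologies.

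For the ring structure, I would equip $\Omega^*(X)$ with the Eilenberg--Zilber shuffle product combining the wedge product on each $X_p$ with the shuffle map on simplicial indices, and check that under the level-wise integration map this corresponds to the cup product on $\|X\|$; this is the standard acyclic-models compatibility and is where the real work lies. Specializing the general statement to $X_* = N\bar{G}$ and $X_* = NG$ then gives the two isomorphisms stated in the theorem. The main obstacle is precisely this last compatibility of the shuffle product with the cup product; the convergence of the spectral sequences and the level-wise de Rham isomorphism are routine, but verifying that the product structures match requires either an explicit cochain homotopy or an appeal to the uniqueness of natural products on the cohomology of simplicial spaces.
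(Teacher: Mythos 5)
The paper does not actually prove this theorem: it is quoted from the literature (Bott--Shulman--Stasheff, Dupont's lecture notes, Mostow--Perchick), so there is no internal proof to compare against. Your outline is essentially the standard argument given in those references: filter the double complex ${\Omega}^{*,*}(X)$ by simplicial degree, compare the resulting spectral sequence (with $E_1^{p,q}=H^q_{dR}(X_p)$) to the spectral sequence of the skeletal filtration of the fat realization $\parallel X \parallel$ via a level-wise de Rham map, invoke the comparison theorem (convergence is unproblematic since the double complex is first-quadrant and finite in each total degree), and then use Segal's identifications $\parallel N\bar{G} \parallel \simeq EG$ and $\parallel NG \parallel \simeq BG$, which the paper records in Section 2. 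The one point I would correct is the ring structure: the product on the total complex ${\Omega}^{*}(X)$ that makes the comparison multiplicative is not the Eilenberg--Zilber shuffle product but the Alexander--Whitney-type front-face/back-face product, namely $\alpha\cdot\beta=(-1)^{qp'}\mu^{*}\alpha\wedge\nu^{*}\beta$ for $\alpha\in{\Omega}^{q}(X_p)$ and $\beta\in{\Omega}^{q'}(X_{p'})$, where $\mu\colon X_{p+p'}\to X_{p}$ and $\nu\colon X_{p+p'}\to X_{p'}$ are induced by the front and back face inclusions; the shuffle map lives on the chain-level (homological) side and does not give the relevant associative product on this cosimplicial cochain complex. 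With that substitution, the compatibility with the cup product on $\parallel X \parallel$ is the usual naturality/acyclic-models verification you describe, and your proof goes through.
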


\bigskip

There is another double complex associated to a simplicial manifold.
\begin{definition}[\cite{Dup}]
A simplicial $n$-form on a simplicial manifold $ \{ {X}_{p} \} $ is a sequence $ \{ {\phi}^{(p)} \}$
of $n$-forms ${\phi}^{(p)}$ on ${\Delta}^{p} \times {X}_{p} $ such that
$${({\varepsilon}^{i} \times id )}^{*} {\phi}^{(p)} = {(id \times {\varepsilon}_{i} )}^{*} {\phi}^{(p-1)}~~~{\rm on} ~~{\Delta}^{p-1} \times {X}_{p}.$$
Here ${\varepsilon}^{i}$ is the canonical $i$-th face operator of ${\Delta}^{p}$.\\
\end{definition}

{\rm Let}  \thinspace $A^{k,l} (X)$ be the set of all simplicial $(k+l)$-forms on ${\Delta}^{p} \times {X}_{p} $ which are expressed locally 
of the form
$$ \sum { a_{ i_1 \cdots i_k j_1 \cdots j_l } (dt_{i_1 } \wedge \cdots \wedge dt_{i_k } \wedge dx_{j_1 } \wedge \cdots \wedge dx_{j_l })}$$
where $(t_0, t_1, \cdots, t_p)$ are the barycentric coordinates in ${\Delta}^{p}  $ and $x_j $ are the local coordinates in $ {X}_{p} $.
We define derivatives as:
$$ d' := {\rm  the \enspace exterior \enspace differential \enspace on \enspace } {\Delta}^{p}  $$
$$ d'' := (-1)^{k} \times {\rm  the \enspace exterior \enspace differential \enspace on \enspace } {X_p }.$$
Then $(A^{k,l} (X) , d' , d'' )$ is a double complex and the following theorem holds.\\

\begin{theorem}[\cite{Dup}]
Let $A^{*} (X)$ denote the total complex of $A^{*,*}(X)$. A map $I_{\Delta} : A^{*} (X)  \rightarrow {\Omega}^{*} (X) $ defined as $ I_{ \Delta }( \alpha ) :=  \int_{{\Delta }^{p}} ( { \alpha } |_{{ \Delta }^{p} \times {X}_{p} } )$
 {induces a natural ring isomorphism} $ I_{ \Delta } ^{*} : H( A^{*} (X)) \cong H({\Omega}^{*} (X))$.

\end{theorem}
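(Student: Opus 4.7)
The plan is to establish the theorem in two stages: first that $I_{\Delta}$ is a morphism of differential graded algebras at the cochain level, and then that it induces an isomorphism on cohomology.

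\emph{Cochain-level DGA structure.} Given $\alpha = \{\phi^{(p)}\} \in A^{k,l}(X)$, I would verify $I_{\Delta}(d\alpha) = d\, I_{\Delta}(\alpha)$ by splitting $d = d' + d''$. The $d''$-term commutes with integration over the fibre $\Delta^{p}$ by elementary calculus, and the sign $(-1)^{k}$ built into the definition is precisely what is needed to match the exterior differential on $\Omega^{*}(X_{p})$. For the $d'$-term, Stokes' theorem on $\Delta^{p}$ turns $\int_{\Delta^{p}} d'\alpha$ into a signed sum of integrals over the faces $\Delta^{p-1}_{i} \subset \partial\Delta^{p}$, and the simplicial compatibility condition $({\varepsilon}^{i} \times \mathrm{id})^{*} \phi^{(p)} = (\mathrm{id} \times {\varepsilon}_{i})^{*} \phi^{(p-1)}$ rewrites each face contribution as $\varepsilon_{i}^{*} I_{\Delta}(\alpha)$, reassembling into the simplicial derivative on $\Omega^{*}(X)$. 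The ring homomorphism property then follows from Fubini combined with the shuffle description of the wedge product on $A^{*,*}$.

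\emph{Quasi-isomorphism via spectral sequence.} I would filter both total complexes by the simplicial degree $p$; $I_{\Delta}$ respects this filtration and so yields a morphism of the associated spectral sequences converging to $H(A^{*}(X))$ and $H(\Omega^{*}(X))$. The $E_{1}$-page of $\Omega^{*,*}(X)$ is manifestly the de Rham cohomology $H^{*}_{\mathrm{dR}}(X_{p})$ assembled with the simplicial horizontal differential induced by the face operators of $X$. The task then reduces to showing that $I_{\Delta}$ induces an isomorphism on each $E_{1}$-term compatibly with the horizontal differential, after which the comparison theorem for spectral sequences closes the argument.

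\emph{Main obstacle.} The delicate step is identifying the $E_{1}$-page of $A^{*,*}(X)$ with $H^{*}_{\mathrm{dR}}(X_{p})$ through $I_{\Delta}$. I would construct an explicit right inverse $E : \Omega^{*}(X_{p}) \to A^{*,*}(X)$ using Whitney's elementary forms on $\Delta^{p}$, which by their combinatorial definition automatically satisfy the simplicial compatibility condition with faces. The relation $I_{\Delta} \circ E = \mathrm{id}$ then holds by direct computation. For the opposite composition, a cochain homotopy between $E \circ I_{\Delta}$ and the identity is produced by integration along the linear contraction of $\Delta^{p}$ onto the vertex $e_{0}$, viewed as a fibre-preserving homotopy operator on $A^{*,*}(X)$. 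This explicit inverse at the cochain level bypasses convergence subtleties and simultaneously makes the naturality of the resulting ring isomorphism transparent.
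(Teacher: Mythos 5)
The paper offers no proof of this statement: it is quoted from Dupont (\cite{Dup}) and used as a black box, so there is nothing internal to compare your argument against. What you have written is, in outline, Dupont's own proof. The chain-map property via Stokes' theorem on $\Delta^{p}$, combined with the compatibility condition $({\varepsilon}^{i}\times \mathrm{id})^{*}\phi^{(p)}=(\mathrm{id}\times{\varepsilon}_{i})^{*}\phi^{(p-1)}$ to convert the boundary contributions into $\sum_i(-1)^{i}\varepsilon_{i}^{*}$, is exactly how the fibre integration is shown to intertwine the two differentials; and the construction of a right inverse $E$ from Whitney's elementary forms, with $I_{\Delta}\circ E=\mathrm{id}$ on the nose and $E\circ I_{\Delta}$ chain-homotopic to the identity via the cone operator coming from the linear contraction of $\Delta^{p}$ onto a vertex, is precisely the argument in Dupont's lecture notes. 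Once you have $E$ and the explicit homotopy, the spectral-sequence paragraph is redundant, and as stated it is slightly misleading: with the filtration by simplicial degree, the identification of the $E_{1}$-page of $A^{*,*}(X)$ is not ``manifest'' --- it is exactly what the Whitney-form construction is needed to prove.

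The one genuine inaccuracy is the claim that the ring homomorphism property ``follows from Fubini combined with the shuffle description of the wedge product.'' The map $I_{\Delta}$ is \emph{not} a ring homomorphism at the cochain level: the product on $\Omega^{*}(X)$ is the cup-type product built from front and back face maps, and integration over $\Delta^{p+p'}$ of a wedge of simplicial forms is not the product of the integrals over $\Delta^{p}$ and $\Delta^{p'}$; the discrepancy is measured by an Eilenberg--Zilber/shuffle chain homotopy. What is true, and what the theorem actually asserts, is that the \emph{induced} map on cohomology is multiplicative. Establishing that requires exhibiting this homotopy (or checking multiplicativity of $E$ up to homotopy), and it is a separate step in Dupont's proof that your sketch should not compress into an appeal to Fubini.
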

\bigskip

Let  $\mathcal{G}$ denote the Lie algebra of $G$. A connection on a simplicial $G$-bundle $\pi : \{ E_p \} \rightarrow \{ M_p \} $ is a sequence of $1$-forms $\{ \theta \}$ on $\{ E_p \}$ with coefficients $\mathcal{G}$
such that $\theta $ restricted  to ${\Delta}^{p} \times {E}_{p} $ is a usual connection form.

There is a canonical connection $\theta \in A^1 (N \bar{G} )$ on ${\gamma} : N \bar{G} \rightarrow NG $ defined as follows:
$$ {\theta } |_{{ \Delta }^{p} \times N \bar{G} (p)} := t_0 {\theta }_0 + \cdots + t_{p} {\theta }_{p}. $$

Here ${\theta }_i $ is defined as ${\theta }_i = {\rm pr}_i ^{*} \bar {\theta } $ where ${\rm pr}_i : { \Delta }^{p} \times N \bar{G} (p) \rightarrow G $ is the projection into the $(i+1)$-th factor of $ N \bar{G} (p) $ and $\bar {\theta }$ is the Maurer-Cartan form of $G$.
We obtain also its curvature $\Omega \in A^2 (N \bar{G} )$ on ${\gamma }$ as:
$$ \Omega |_{{ \Delta }^{p} \times N \bar{G} (p) }= d \theta |_{{ \Delta }^{p} \times N \bar{G} (p) }
+  \frac{1}{2} [ \theta |_{{ \Delta }^{p} \times N \bar{G} (p) } , \theta |_{{ \Delta }^{p} \times N \bar{G} (p) } ]. $$

Let  ${\rm  I}^{*} (G)$ denote the ring of $G$-invariant polynomials on $\mathcal{G}$. For $P \in I^k (G)$, we restrict 
$P( \Omega ) \in A^{2k} (N \bar{G} )$ to each ${\Delta}^{p} \times N \bar{G} (p) $ and apply the usual Chern-Weil theory then we have  $I_{\Delta} (P( \Omega )) \in \Omega^{2k}(NG)$. In this way we have a  homomorphism ${\rm  I}^{*} (G) \rightarrow H({\Omega}^{*} (NG)) $
which maps $P \in I^* (G)$  to $[I_{\Delta } ( P({\Omega}) )]$.

\section{The Euler class in the double complex}

In this section we exhibit a cocycle in $\Omega ^{*}(NSO(2p)) $  which represents the
Euler class of the universal bundle $ESO(2p) \rightarrow BSO(2p)$. Throughout this section,
$G$ means $SO(2p)$.

Recall that the polynomial on $\mathfrak{so}(2p)$ called Pfaffian is defined as follows:
$${\rm Pf}(A, \cdots, A)=\frac{1}{2^{2p}\pi ^{p}p!} \sum_{\tau \in \mathfrak{S} _{2p} } {\rm sgn}(\tau)a_{\tau (1)\tau (2)} \cdots a_{\tau (2p-1)\tau (2p)}.$$
Here $a_{ij}$ is a $(i,j)$ entry of $A \in \mathfrak{so}(2p)$.

\subsection{The cochain on the edge}

We first give the cochain in ${\Omega}^{2p+1}(N\bar{G}(1))$ which corresponds to the Euler class.
This is given by integrating ${\rm Pf} \left(  {  \Omega } |_{{ \Delta }^{1} \times N \bar{G} (1) }  \right) $ along ${\Delta}^{1}$. Since $  \Omega |_{{ \Delta }^{1} \times N \bar{G} (1) }
 = -  dt_1 \wedge (\theta _0 - \theta _{1} )
-  t_0 t_1 ( \theta _0 - \theta _1  ) ^{2} $, we can see ${\rm Pf} \left(  {  \Omega } |_{{ \Delta }^{1} \times N \bar{G} (1) }  \right)$ is equal to
$$ \frac{1}{2^{2p}\pi ^{p}p!} \sum_{\tau \in \mathfrak{S} _{2m}} {\rm sgn}(\tau)((-  dt_1 \wedge (\theta _0 - \theta _{1} )
-  t_0 t_1 ( \theta _0 - \theta _1  ) ^{2})_{\tau (1)\tau (2)}$$
$$~~~~~~~~~~~~~~~~~~~~~~~~ \cdots (-  dt_1 \wedge (\theta _0 - \theta _{1} )
-  t_0 t_1 ( \theta _0 - \theta _1  ) ^{2})_{\tau (2p-1)\tau (2p)}).$$
We set:
$$\bar{P}_{\tau} ^k:=( \theta _0 - \theta _1  ) ^{2} _{\tau (1)\tau (2)} \cdots ( \theta _0 - \theta _1  ) ^{2} _{\tau (2k-3)\tau (2k-2)}( \theta _0 - \theta _1  )_{\tau (2k-1)\tau (2k)}$$
$$~~~~~~~~~~~~~~~~~~~~~~~~~~~~~~~~~~~~~~~~~~( \theta _0 - \theta _1  ) ^{2} _{\tau (2k+1)\tau (2k+2)} \cdots ( \theta _0 - \theta _1  ) ^{2} _{\tau (2p-1)\tau (2p)}.$$
Then  the following equation holds.
$$\int_{{\Delta}^{1}} {\rm Pf} \left(  {  \Omega } |_{{ \Delta }^{1} \times N \bar{G} (1) }  \right)= (-1)^p  \frac{1}{2^{2p}\pi ^{p}p!}  \left( \int ^1 _0 (t_0 t_1)^{p-1} dt_1 \right)  \sum_{\tau \in \mathfrak{S} _{2p}} \sum_{k=1} ^p {\rm sgn} (\tau)  \bar{P}_{\tau} ^k . $$

Now we obtain the cochain in $ \Omega ^{2p-1} (NG(1)) $.

\begin{proposition}
The cochain $\mu_{p}$ in $ \Omega ^{2p-1} (NG(1)) $ which corresponds to the Euler class is given as follows:
$$\mu_{1}= (-1)^p \frac{1}{2^{2p}\pi ^{p}p!} \frac{1}{ {}_{2p-1}C_{p-1}\cdot p}  \sum_{\tau \in \mathfrak{S} _{2p}} \sum_{k=1} ^p  {\rm sgn} (\tau)  {P}_{\tau} ^k. $$

Here ${P}_{\tau} ^k$ is defined as:

$${P}_{\tau} ^k:=( h^{-1}dh ) ^{2} _{\tau (1)\tau (2)} \cdots ( h^{-1}dh   ) ^{2} _{\tau (2k-3)\tau (2k-2)}( h^{-1}dh  )_{\tau (2k-1)\tau (2k)}$$
$$~~~~~~~~~~~~~~~~~~~~~~~~~~~~~~~~~~~~(h^{-1}dh    ) ^{2} _{\tau (2k+1)\tau (2k+2)} \cdots ( h^{-1}dh  ) ^{2} _{\tau (2p-1)\tau (2p)}.$$

\end{proposition}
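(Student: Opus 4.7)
Plan: The expression for $\mu_1$ must be extracted from the displayed formula
\[
\int_{\Delta^1}\mathrm{Pf}\bigl(\Omega|_{\Delta^1\times N\bar G(1)}\bigr) = (-1)^p\frac{1}{2^{2p}\pi^p p!}\left(\int_0^1(t_0 t_1)^{p-1}dt_1\right)\sum_{\tau,k}\mathrm{sgn}(\tau)\bar P_\tau^k,
\]
by carrying out two independent steps: evaluating the scalar, and translating the symbols $\theta_0-\theta_1$ on $N\bar G(1)$ into $h^{-1}dh$ on $NG(1)$.

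For the scalar, the integral is a Beta function: $\int_0^1 t_1^{p-1}(1-t_1)^{p-1}\,dt_1 = B(p,p) = \frac{((p-1)!)^2}{(2p-1)!}$. A short manipulation with binomial coefficients, using ${}_{2p-1}C_{p-1}\cdot p = \frac{(2p-1)!\,p}{(p-1)!\,p!} = \frac{(2p-1)!}{((p-1)!)^2}$, identifies this with $\frac{1}{{}_{2p-1}C_{p-1}\cdot p}$, which is precisely the factor appearing in the statement of the proposition.

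For the descent to $NG(1)=G$, I would use the section $s\colon NG(1)\to N\bar G(1)$, $s(h)=(h,e)$, which satisfies $\gamma\circ s=\mathrm{id}$. Pulling back the canonical forms gives $s^*\theta_0 = h^{-1}dh$ and $s^*\theta_1=0$, so $s^*(\theta_0-\theta_1)=h^{-1}dh$ and hence $s^*\bar P_\tau^k = P_\tau^k$ term by term. Because $\mathrm{Pf}$ is $SO(2p)$-invariant, the Chern--Weil construction reviewed in Section~2 guarantees that $I_\Delta(\mathrm{Pf}(\Omega))$ is actually a form on $NG$, and pulling back along $s$ therefore recovers this genuine form on $NG(1)$, producing the stated expression for $\mu_1$.

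The only delicate point is the descent. Concretely, one must check that $\sum_{\tau,k}\mathrm{sgn}(\tau)\bar P_\tau^k$, viewed as a $(2p-1)$-form on $N\bar G(1)=G\times G$, depends only on $h=g_0 g_1^{-1}$. Under the diagonal right $G$-action $(g_0,g_1)\mapsto(g_0 g, g_1 g)$ the quantity $h$ is preserved while $\theta_0-\theta_1$ transforms by $\mathrm{Ad}(g^{-1})$; since the sum over $\tau$ (with sign) is a polarisation of the Pfaffian, this transformation cancels, confirming invariance. Once this is in place, all remaining steps are direct substitution.
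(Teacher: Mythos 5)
Your proposal is correct and follows essentially the same route as the paper: the scalar factor comes from the Beta integral $\int_0^1 t_1^{p-1}(1-t_1)^{p-1}\,dt_1 = \frac{((p-1)!)^2}{(2p-1)!} = \frac{1}{{}_{2p-1}C_{p-1}\cdot p}$, and the identification of the forms rests on the $\mathrm{Ad}$-invariance of the polarized Pfaffian, which the paper packages as the single identity $\gamma^*\sum_{\tau}\mathrm{sgn}(\tau)P_{\tau}^k=\sum_{\tau}\mathrm{sgn}(\tau)\bar{P}_{\tau}^k$ while you phrase it as descent along the section $s(h)=(h,e)$. The only cosmetic difference is that your descent argument should also note that $\theta_0-\theta_1$ is horizontal for $\gamma$ (it vanishes on the diagonal right-action directions), so that invariance alone suffices for the form to be basic.
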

\bigskip

\begin{proof}
This follows from the equation $\displaystyle \int ^1 _0 (t_0 t_1)^{p-1} dt_1=\frac{1}{ {}_{2p-1}C_{p-1} \cdot p} $
and \\
$\displaystyle \gamma^*   \sum_{\tau \in \mathfrak{S} _{2p}}  {\rm sgn} (\tau) {P}_{\tau} ^k  =\sum_{\tau \in \mathfrak{S} _{2p}}  {\rm sgn} (\tau)\bar{P}_{\tau} ^k$.
\end{proof}

As a special case of Proposition 3.1, we obtain the following theorem.

\begin{theorem}
In the case of $G = SO(2)$, the cocycle $E_{1,1}$ in $ \Omega ^{2} (NG) $ which represents the Euler class of $ESO(2) \rightarrow BSO(2)$ is given as follows:  
$$E_{1,1}= \frac{1}{4 \pi}(-(h^{-1}dh)_{12}+(h^{-1}dh)_{21})~~\in \Omega^1(SO(2)). $$
If we write an element $h$ in $SO(2)$  as
\[ h = \left(
       \begin{array}{@{\,}cc@{\,}}
       \cos \theta & - \sin \theta \\
       \sin \theta & \cos \theta 
       \end{array}
       \right) \]
then the equation 
\[ h^{-1}dh = \left(
       \begin{array}{@{\,}cc@{\,}}
       0 & - d \theta \\
       d \theta & 0
       \end{array}
       \right) \]
holds, so we obtain
$$E_{1,1}= \frac{1}{4 \pi}(2 d \theta)=  \frac{d \theta}{2 \pi}. $$
\end{theorem}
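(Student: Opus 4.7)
The plan is to read off Theorem 3.1 directly from Proposition 3.1 by specializing to $p=1$, and then to confirm the explicit shape of $h^{-1}dh$ by a small $2\times 2$ matrix calculation.

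First I would substitute $p=1$ into the formula for $\mu_1$: the sign is $(-1)^1 = -1$, the factorial/power prefactor is $1/(2^{2}\pi \cdot 1!) = 1/(4\pi)$, and the binomial factor is ${}_{1}C_{0}\cdot 1 = 1$, so the overall scalar is $-1/(4\pi)$. At $p=1$ the inner index $k$ takes only the value $k=1$, and the definition of $P_\tau^k$ collapses (no factors of $(h^{-1}dh)^2$ appear on either side of the single linear factor), so $P_\tau^1 = (h^{-1}dh)_{\tau(1)\tau(2)}$. Summing over the two elements of $\mathfrak{S}_2$ with their signs gives $(h^{-1}dh)_{12} - (h^{-1}dh)_{21}$, and multiplying by $-1/(4\pi)$ rearranges the terms into exactly the displayed expression for $E_{1,1}$.

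For the second assertion, I would compute $h^{-1}dh = h^{\top}dh$ directly, using $h^{-1}=h^{\top}$ in $SO(2)$. The diagonal entries vanish because $-\cos\theta\sin\theta + \sin\theta\cos\theta = 0$ and similarly for the $(2,2)$-slot, while the off-diagonal entries collapse to $\mp d\theta$ via $\cos^{2}\theta + \sin^{2}\theta = 1$. Substituting $(h^{-1}dh)_{12} = -d\theta$ and $(h^{-1}dh)_{21} = d\theta$ into the formula for $E_{1,1}$ then gives $\tfrac{1}{4\pi}(d\theta + d\theta) = d\theta/(2\pi)$.

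There is no substantive obstacle here: the whole statement is an immediate specialization of Proposition 3.1 combined with an elementary calculation. The only care needed is in sign bookkeeping over $\mathfrak{S}_2$, to ensure that the transposition $(1\,2)$ contributes with a minus sign so that the coefficient of $(h^{-1}dh)_{21}$ comes out positive rather than negative, and then that the minus sign in $(h^{-1}dh)_{12} = -d\theta$ is correctly absorbed when producing the $2d\theta$ in the final display.
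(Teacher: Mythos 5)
Your proposal is correct and follows exactly the route the paper takes: the paper presents Theorem 3.1 as "a special case of Proposition 3.1," i.e.\ the $p=1$ specialization of $\mu_1$ (which gives the scalar $-1/(4\pi)$ with ${}_{1}C_{0}\cdot 1=1$ and the signed sum $(h^{-1}dh)_{12}-(h^{-1}dh)_{21}$ over $\mathfrak{S}_2$), combined with the elementary computation $h^{-1}dh=h^{\top}dh$ that is carried out inside the theorem statement itself. Your sign bookkeeping and the final evaluation $\frac{1}{4\pi}(d\theta+d\theta)=d\theta/(2\pi)$ match the paper.
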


\subsection{The cochain in $ \Omega ^{p} (NG(p)) $}

In $ \Omega ^{p} (N \bar{G}(p)) $, $\displaystyle  \Omega |_{{ \Delta }^{p} \times N \bar{G} (p) }$ is equal to
$\displaystyle -\sum _{i=1} ^{p}  dt_i \wedge (\theta _0 - \theta _{i} )-  \sum _{ 0 \leq i < j \leq p} t_i t_j ( \theta _i - \theta _j  ) ^{2}$,
so the cochain $\int_{{ \Delta }^{p}} {\rm Pf}( \Omega |_{{ \Delta }^{p} \times N \bar{G} (p) })$ in $ \Omega ^{p} (N\bar{G}(p)) $which corresponds to the Euler class is given as follows:
$$\frac{1}{2^{2p}\pi ^{p}p!} \sum_{\tau \in \mathfrak{S} _{2p}} {\rm sgn}(\tau) \bigl( -\sum _{i=1} ^{p}  dt_i \wedge (\theta _0 - \theta _{i} ) \bigl)_{\tau (1)\tau (2)}
\cdots  \bigl( -\sum _{i=1} ^{p}  dt_i \wedge (\theta _0 - \theta _{i} ) \bigl)_{\tau (2p-1)\tau (2p)}.$$

Now
$$ dt_i \wedge (\theta _0 -\theta _{i} ) = dt_i \wedge \{ (\theta _0 -\theta _{1} ) + (\theta _{1} -\theta _{2} ) +
\cdots + (\theta _{i-1} -\theta _{i} ) \} $$

and for any differential forms $ \alpha , \beta , \gamma $ and any integer $ 0 \leq k, l, x \leq p$, the equation
$  \alpha \wedge (dt_i \wedge ( \theta _x - \theta _{x+1} )_{\tau (2k-1)\tau (2k)}) \wedge \beta \wedge (dt_j \wedge ( \theta _x - \theta _{x+1} )_{\tau (2l-1)\tau (2l)}) \wedge \gamma  = - \alpha \wedge (dt_j \wedge ( \theta _x - \theta _{x+1} )_{\tau (2k-1)\tau (2k)}) \wedge \beta \wedge (dt_i \wedge ( \theta _x - \theta _{x+1} )_{\tau (2l-1)\tau (2l)}) \wedge \gamma $ holds,
so the terms of these forms cancel with each other in ${\rm Pf}( \Omega |_{{ \Delta }^{p} \times N \bar{G} (p) })$.

We set:
$$\varphi_s:=h_1 \cdots h_{s-1}dh_s h^{-1} _s \cdots h^{-1} _1.$$ 
Then we can check that $\gamma^*\varphi _s=g_1( \theta _{s-1} - \theta _{s} )g^{-1}_1$ hence we obtain the following proposition.

\begin{proposition}
The cochain $\mu_{p}$ in $ \Omega ^{p} (NG(p)) $ which corresponds to the Euler class is given as follows:
$$\mu_{p}= \frac{(-1)^{\frac{p(p+1)}{2}}}{2^{2p}\pi ^{p}(p!)^2}  \sum _{\sigma \in \mathfrak{S} _{p}} \sum _{\tau \in \mathfrak{S} _{2p}}  {\rm sgn} ( \tau ) {\rm sgn} ( \sigma ) (\varphi_{ \sigma (1)})_{\tau (1)\tau (2)} \cdots  (\varphi_{ \sigma (p) })_{\tau (2p-1)\tau (2p)}.$$
\end{proposition}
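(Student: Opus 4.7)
The plan is to evaluate $I := \int_{\Delta^p} \mathrm{Pf}\bigl(\Omega|_{\Delta^p \times N\bar{G}(p)}\bigr)$ directly from the displayed expression for $\Omega$ and then identify the result on $NG(p)$ via the relation $\gamma^{*}\varphi_s = g_1(\theta_{s-1}-\theta_s)g_1^{-1}$ noted just before the proposition.

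Since $I$ is obtained by integrating a $2p$-form on $\Delta^p\times N\bar{G}(p)$ over the $p$-simplex, only the part of the integrand that is a top form in the $dt_i$'s survives. Thus in the Pfaffian expansion every one of the $p$ wedge factors must come from $-\sum_{i=1}^{p} dt_i\wedge(\theta_0-\theta_i)$; the purely $N\bar{G}$-valued piece $-\sum t_it_j(\theta_i-\theta_j)^2$ contributes nothing. After telescoping $\theta_0-\theta_{i_k}=\sum_{s_k=1}^{i_k}(\theta_{s_k-1}-\theta_{s_k})$, the antisymmetry identity displayed just above the proposition kills any term in which some telescope index is repeated. Hence the surviving summands have $(s_1,\ldots,s_p)$ a permutation $\sigma\in\mathfrak{S}_p$, while $(i_1,\ldots,i_p)$ must also be a permutation $\pi$ (so that $dt_{i_1}\wedge\cdots\wedge dt_{i_p}\neq 0$) satisfying $\sigma(k)\le\pi(k)$ for every $k$; because $\sum_k\sigma(k)=\sum_k\pi(k)=p(p+1)/2$, this entrywise inequality is forced to be an equality, so $\pi=\sigma$.

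For each such $\sigma$ I would then collect the $dt$'s to the front of the wedge product. Moving the $p$ one-forms $(\theta_{\sigma(k)-1}-\theta_{\sigma(k)})_{\tau(2k-1)\tau(2k)}$ past the $dt_{\sigma(k')}$'s contributes $(-1)^{p(p-1)/2}$; rewriting $dt_{\sigma(1)}\wedge\cdots\wedge dt_{\sigma(p)}=\mathrm{sgn}(\sigma)\,dt_1\wedge\cdots\wedge dt_p$ and using $\int_{\Delta^p} dt_1\wedge\cdots\wedge dt_p=1/p!$ produces the factor $\mathrm{sgn}(\sigma)/p!$. Combined with the $(-1)^p$ from the minus sign in front of $\sum dt_i\wedge(\theta_0-\theta_i)$ and the $1/(2^{2p}\pi^p p!)$ in the Pfaffian normalisation, these agree with the coefficient $(-1)^{p(p+1)/2}/(2^{2p}\pi^p (p!)^2)$ of the proposition. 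To finish, I substitute $\theta_{s-1}-\theta_s=g_1^{-1}(\gamma^{*}\varphi_s)g_1$; the $SO(2p)$-invariance of the signed Pfaffian-polarisation expression $\sum_{\tau}\sum_{\sigma}\mathrm{sgn}(\tau)\mathrm{sgn}(\sigma)\prod_k(\cdot)_{\tau(2k-1)\tau(2k)}$ makes the adjoint action of $g_1$ drop out, giving $\gamma^{*}\mu_p=I$. Because the right-hand side is $G$-invariant on $N\bar{G}(p)$, it descends to the claimed cochain $\mu_p\in\Omega^p(NG(p))$.

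I expect the main obstacle to be the careful sign bookkeeping: three distinct signs---the $(-1)^p$ from the $dt$-part of $\Omega$, the $(-1)^{p(p-1)/2}$ from rearranging a wedge of $p$ one-forms past $p$ differentials, and the permutation sign $\mathrm{sgn}(\sigma)$ from reindexing the $dt_{\sigma(k)}$'s---must be combined correctly to yield the final $(-1)^{p(p+1)/2}$. Verifying the conjugation invariance of the signed polarisation, which is a consequence of the $SO(2p)$-invariance of $\mathrm{Pf}$ together with the alternating polarisation formula for $1$-form arguments, is the other routine but nontrivial point.
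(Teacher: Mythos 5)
Your argument is correct and follows essentially the same route as the paper, which states Proposition 3.2 without a formal proof and relies on exactly the preceding discussion: keeping only the $dt$-top part of $\mathrm{Pf}(\Omega|_{\Delta^p\times N\bar G(p)})$, telescoping $\theta_0-\theta_i$, cancelling repeated differences by the stated antisymmetry, integrating over $\Delta^p$, and descending via $\gamma^*\varphi_s$. Your write-up in fact supplies details the paper omits --- notably the observation that $s_k\le i_k$ together with equal index sums forces the two permutations to coincide, and the explicit combination $(-1)^p(-1)^{p(p-1)/2}\mathrm{sgn}(\sigma)/p!$ yielding the coefficient $(-1)^{p(p+1)/2}/(2^{2p}\pi^p(p!)^2)$, which checks against $E_{2,2}$ in Theorem 3.3.
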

\bigskip

Using Proposition 3.1 and Proposition 3.2, we obtain the  cocycle which represents the Euler class of $ESO(4) \rightarrow BSO(4)$ in $ \Omega ^{4} (NSO(4)) $.

\begin{theorem}
In the case of $G=SO(4)$, the cocycle which represents the Euler class of $ESO(4) \rightarrow BSO(4)$ in $ \Omega ^{4} (NG) $ is the sum of the following
$E_{1,3}$ and $E_{2,2}$:
$$
\begin{CD}
0 \\
@AA{d''}A \\
E_{1,3} \in {\Omega}^{3} (SO(4) )@>{d'}>>{\Omega}^{3} (SO(4) \times SO(4))\\
@.@AA{d''}A\\
@.E_{2,2} \in {\Omega}^{2} (SO(4) \times SO(4))@>{d'}>> 0
\end{CD}
$$
$$E_{1,3} =  \frac{1}{192 \pi ^2} \sum_{\tau \in \mathfrak{S} _{4}}   {\rm sgn} (\tau)  \bigl((h^{-1}dh)_{\tau (1) \tau(2)}(h^{-1}dh)^2 _{\tau (3) \tau(4)}~~~~~~~~~~~~~~~~~~~~~~~~~~~~$$
$$~~~~~~~~~~~~~~~~~~~~~~~~~~~~~~~~~~~~~~~~~~~~~~~~~+(h^{-1}dh)^2 _{\tau (1) \tau(2)}(h^{-1}dh) _{\tau (3) \tau(4)} \bigl)$$
$$ E_{2,2} =  \frac{-1}{64 \pi ^2} \sum_{\tau \in \mathfrak{S} _{4}}   {\rm sgn} (\tau) \bigl((h_1 ^{-1}dh_1)_{\tau (1) \tau(2)}(dh_2 h_2^{-1}) _{\tau (3) \tau(4)}~~~~~~~~~~~~~~~~~~~~~~~~~~~~$$
$$~~~~~~~~~~~~~~~~~~~~~~~~~~~~~~~~~~~~~~~~~~~~~~+(dh_2h_2^{-1}) _{\tau (1) \tau(2)}(h_1^{-1}dh_1) _{\tau (3) \tau(4)} \bigl).$$

\end{theorem}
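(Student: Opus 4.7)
The plan is to read Theorem~3.2 as a specialization of Propositions~3.1 and~3.2 to $p=2$, $G=SO(4)$, after checking that the remaining bidegrees of the degree-$4$ cocycle $I_{\Delta}({\rm Pf}(\Omega))\in\Omega^{4}(NG)$ vanish. The vanishing of $E_{0,4}$ is free: $NG(0)$ is a point, so $\Omega^{4}(NG(0))=0$. For $k\geq 3$, I would use the formula
$$\Omega|_{\Delta^{k}\times N\bar{G}(k)}=-\sum_{i=1}^{k}dt_i\wedge(\theta_0-\theta_i)-\sum_{0\leq i<j\leq k}t_it_j(\theta_i-\theta_j)^{2},$$
noting that each local summand carries at most one $dt_i$; hence ${\rm Pf}(\Omega)$, which is quadratic in $\Omega$ for $p=2$, contains at most two factors $dt_i$, so fibre integration along $\Delta^{k}$ annihilates it, giving $E_{3,1}=E_{4,0}=0$.

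Next, I would specialize the two propositions. Proposition~3.1 at $p=2$ produces the coefficient $(-1)^{2}/(2^{4}\pi^{2}\cdot 2!)\cdot 1/({}_{3}C_{1}\cdot 2)=1/(192\pi^{2})$, and writing out $P_{\tau}^{1}$ and $P_{\tau}^{2}$ reproduces $E_{1,3}$ exactly. Proposition~3.2 at $p=2$ yields the prefactor $-1/(64\pi^{2})$ in front of
$$\sum_{\sigma\in\mathfrak{S}_{2}}\sum_{\tau\in\mathfrak{S}_{4}}{\rm sgn}(\tau){\rm sgn}(\sigma)(\varphi_{\sigma(1)})_{\tau(1)\tau(2)}(\varphi_{\sigma(2)})_{\tau(3)\tau(4)},$$
with $\varphi_{1}=dh_{1}h_{1}^{-1}=h_{1}(h_{1}^{-1}dh_{1})h_{1}^{-1}$ and $\varphi_{2}=h_{1}(dh_{2}h_{2}^{-1})h_{1}^{-1}$. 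Since both $\varphi_{1}$ and $\varphi_{2}$ are simultaneous $h_{1}$-conjugates of $\mathfrak{so}(4)$-valued $1$-forms, and the Pfaffian-type pairing $\sum_{\tau}{\rm sgn}(\tau)A_{\tau(1)\tau(2)}B_{\tau(3)\tau(4)}$ is invariant under simultaneous $SO(4)$-conjugation of antisymmetric matrices $A,B$, the outer $h_{1}$-conjugation drops out. The two contributions from $\sigma=\mathrm{id}$ and $\sigma=(1\,2)$ should then assemble into the symmetric pair of summands stated for $E_{2,2}$.

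The delicate step, and the one I expect to require the most care, is the sign bookkeeping in assembling $E_{2,2}$: one must combine the ${\rm sgn}(\sigma)$ factor from $\mathfrak{S}_{2}$, the wedge-anticommutativity of the $1$-forms $(h_{1}^{-1}dh_{1})_{ij}$ and $(dh_{2}h_{2}^{-1})_{kl}$ living on different factors of $SO(4)\times SO(4)$, and a relabeling $\tau\mapsto\tau\circ(1\,3)(2\,4)$ in order to identify the simplified $\sigma$-contributions with the two summands appearing in the theorem's expression for $E_{2,2}$. Once this sign tracking is executed, the cocycle property in the total complex is automatic from $I_{\Delta}({\rm Pf}(\Omega))$ being closed (by the universal Chern--Weil construction of Section~2), and the theorem follows.
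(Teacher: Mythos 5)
Your proposal is correct and follows exactly the route the paper intends: the paper offers no written proof beyond the remark that the theorem follows from Propositions 3.1 and 3.2, and your specialization to $p=2$ (coefficients $1/(192\pi^2)$ and $-1/(64\pi^2)$, the conjugation-invariance of the polarized Pfaffian removing the outer $h_1$-conjugation in $\varphi_2$, and the sign bookkeeping via anticommutativity and the relabeling $\tau\mapsto\tau\circ(1\,3)(2\,4)$) is precisely what is needed. Your additional check that the components in bidegrees $(0,4)$, $(3,1)$, $(4,0)$ vanish because $\mathrm{Pf}(\Omega)$ is quadratic in $\Omega$ and hence carries at most two $dt_i$'s is a detail the paper leaves implicit, and it is correct.
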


\subsection{The cocycle in $ \Omega ^{p+q} (NG(p-q)) $}

Repeating the same argument in section 3.2, we obtain a cocycle in $ \Omega ^{p+q} (NG(p-q)) $.

We set:
$$ R_{ij} := (\varphi_i + \varphi_{i+1}  + \cdots + \varphi_{j-1}  )^{2} \qquad (1 \leq i < j \leq p-q+1).$$

\begin{theorem} 
The cocycle in $ \Omega ^{p+q} (NG(p-q)) \ (0 \leq q \leq p-1)$  which represents the Euler class of $ESO(2p) \rightarrow BSO(2p)$
is
$$ \sum_{\sigma \in \mathfrak{S} _{p-q}, \tau \in \mathfrak{S} _{2p}}\sum (T^{\tau,\sigma}_{p,q} (R_{i_1j_1} )_{\tau (1)\tau (2)}( \varphi_{ \sigma (1)} )_{\tau (3)\tau (4)} ~~~~~~~~~~~~~~~~~~~~ $$
$$~~~~~~~~~~~~~~\cdots ( R_{i_qj_q})
_{\tau (2p-3)\tau (2p-2)}(\varphi_{ \sigma (p-q)})_{\tau (2p-1)\tau (2p)}) $$
where $ R_{ij} \enspace (1 \leq i < j \leq p-q+1 ) $  are put $q$-times between $\varphi_{ \sigma (l)}$ and  $\varphi_{ \sigma (l+1)}$ or the edge
 in $\varphi_{ \sigma (1)} \cdots  \varphi_{ \sigma (p-q)}$ permitting overlaps and  $\sum$ means the sum of all such forms. $T^{\tau,\sigma}_{p,q}$ is defined as:
$$T^{\tau,\sigma}_{p,q}={\rm sgn} ( \tau ) {\rm sgn} ( \sigma ) \frac{(-1)^{p+\frac{(p-q)(p-q-1)}{2}}}{2^{2p}\pi ^{p}p!} \left( \int _{\Delta ^{p-q}} \prod_{i<j} (t_{i-1} t_{j-1})^{r_{ij}}  dt_1 \wedge \cdots \wedge dt_{p-q} \right) $$
where  $r_{ij}$ means the number of $R_{ij} $ in each form.

\end{theorem}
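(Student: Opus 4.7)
The plan is to follow the same strategy that produced Propositions 3.1 and 3.2, now applied to the general bidegree $(p-q, p+q)$. Starting from the simplicial curvature
$$\Omega\bigl|_{\Delta^{p-q}\times N\bar{G}(p-q)} = -\sum_{i=1}^{p-q} dt_i\wedge(\theta_0-\theta_i) - \sum_{0\le i<j\le p-q} t_i t_j\,(\theta_i-\theta_j)^2,$$
I would first expand $\mathrm{Pf}(\Omega|_{\Delta^{p-q}\times N\bar{G}(p-q)})$ as a sum over $\tau\in\mathfrak{S}_{2p}$ and choose one of the two summand types at each of the $p$ slots. Since the chain is to be integrated along $\Delta^{p-q}$, only those terms carrying exactly $p-q$ factors of type $dt_i\wedge(\theta_0-\theta_i)$, and consequently $q$ factors of type $t_it_j(\theta_i-\theta_j)^2$, survive.

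Next I would perform the telescoping substitutions
$$\theta_0-\theta_i = \sum_{k=1}^{i}(\theta_{k-1}-\theta_k),\qquad \theta_i-\theta_j = \sum_{k=i+1}^{j}(\theta_{k-1}-\theta_k),$$
exactly as in Section 3.2, and invoke the antisymmetry identity already used there to cancel every pair of the form $(dt_i\wedge\eta_x)\cdots(dt_j\wedge\eta_x)$ with $\eta_x=(\theta_{x-1}-\theta_x)_{\tau(\cdot)\tau(\cdot)}$. The upshot is that, after rearrangement, the $p-q$ surviving $dt$-blocks can be indexed by a permutation $\sigma\in\mathfrak{S}_{p-q}$ assigning the consecutive increments $\theta_{\sigma(l)-1}-\theta_{\sigma(l)}$ to the $l$-th $dt$-slot, producing the factor $\mathrm{sgn}(\sigma)$ and the overall ordering $dt_1\wedge\cdots\wedge dt_{p-q}$ needed for integration. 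The $q$ remaining quadratic blocks become the $R_{ij}$'s, distributed in all admissible positions between and around the $\varphi_{\sigma(l)}$'s with multiplicities $r_{ij}$; the product $\prod_{i<j}(t_{i-1}t_{j-1})^{r_{ij}}$ then collects the coefficient polynomials in the $t$'s, giving exactly the integral that defines $T^{\tau,\sigma}_{p,q}$.

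Finally I would pull back through $\gamma^\ast$. Using $\gamma^\ast\varphi_s = g_1(\theta_{s-1}-\theta_s)g_1^{-1}$ and the $G$-invariance of $\mathrm{Pf}$ under simultaneous conjugation of all $2p$ arguments, the conjugations by $g_1$ drop out and every $(\theta_{s-1}-\theta_s)_{\tau(2k-1)\tau(2k)}$ becomes $(\varphi_s)_{\tau(2k-1)\tau(2k)}$, while the corresponding $R_{ij}$ descend to the stated form in $\Omega^\ast(NG(p-q))$. The cocycle property then follows from Theorem 2.2 combined with the usual Chern-Weil argument, since the entire construction is just $I_\Delta(\mathrm{Pf}(\Omega))$ in one bidegree of $A^{*,*}(NG)$.

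The main obstacle is purely bookkeeping: keeping track of the overall sign $(-1)^{p+(p-q)(p-q-1)/2}$, which aggregates the sign $(-1)^p$ from the $p$ minus signs in the $dt$-terms of $\Omega$, the rearrangement sign from moving all $dt_i$'s to the left in the order $dt_1\wedge\cdots\wedge dt_{p-q}$, and the sign from pulling all scalar coefficients $t_it_j$ past the forms. Likewise, one must justify carefully that the antisymmetrization really identifies the contributions for different $(i_1,\dots,i_q;j_1,\dots,j_q)$ and $\sigma$ with the claimed multiplicities $r_{ij}$, rather than double-counting. Once these combinatorial and sign verifications are in place, the formula in the statement drops out.
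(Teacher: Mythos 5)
Your proposal is correct and follows exactly the route the paper intends: the paper gives no written proof of this theorem beyond the remark that one repeats the argument of Section 3.2, and your outline (expanding ${\rm Pf}$ of the restricted curvature, keeping the terms with exactly $p-q$ factors of type $dt_i\wedge(\theta_0-\theta_i)$ and $q$ quadratic factors, telescoping, cancelling via the antisymmetry identity, integrating over $\Delta^{p-q}$, and descending through $\gamma^*$ using the conjugation-invariance of ${\rm Pf}$) is precisely that argument spelled out. The sign and multiplicity bookkeeping you flag is real but is also left implicit in the paper.
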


\begin{theorem}
In the case of $G=SO(6)$, the cocycle which represents the Euler class in $ \Omega ^{6} (NG) $ is the sum of the following
$E_{1,5} , E_{2,4}$ and $E_{3,3}$:
$$
\begin{CD}
0 \\
@AA{d''}A \\
{ E_{1,5} \in {\Omega}^{5} (G )}@>{d'}>>{\Omega}^{5} (NG(2))\\
@.@AA{d''}A\\
@.{ E_{2,4} \in {\Omega}^{4} (NG(2))}@>{d'}>> {\Omega}^{4} (NG(3)) \\
@.@.@AA{d''}A\\
@.@.{ E_{3,3} \in {\Omega}^{3} (NG(3))}@>{d'}>> 0
\end{CD}
$$

$$E_{1,5} = \frac{-1}{2^6 \cdot 180 \pi ^3} \sum_{\tau \in \mathfrak{S} _{6}}   {\rm sgn} (\tau)   \bigl((h^{-1}dh)^2_{\tau (1) \tau(2)}(h^{-1}dh) _{\tau (3) \tau(4)} ({h^{-1}dh} )_{\tau (5) \tau(6)}~~~~~~~~~ $$
$$~~~~~~~~~~~~+(h^{-1}dh)_{\tau (1) \tau(2)}(h^{-1}dh)^2 _{\tau (3) \tau(4)} ({h^{-1}dh} )_{\tau (5) \tau(6)}$$
$$~~~~~~~~~~~~~~~~~~~~~~~~~+(h^{-1}dh)_{\tau (1) \tau(2)}(h^{-1}dh) _{\tau (3) \tau(4)} ({h^{-1}dh} )^2_{\tau (5) \tau(6)}\bigl)$$

$$    E_{2,4}= \frac{1}{2^6 \cdot 6 \cdot 4! \pi ^3} \sum_{\tau \in \mathfrak{S} _{6}}   {\rm sgn} (\tau) \cdot~~~~~~~~~~~~~~~~~~~~~~~~~~~~~~~~~~~~~~~~~~~~~~~~~~ $$

$$\biggl((h_1 ^{-1}dh_1)_{\tau (1) \tau(2)}(dh_2 h_2^{-1}) _{\tau (3) \tau(4)}\cdot
~~~~~~~~~~~~~~~~~~~~~~~~~~~~~~~~~~~~~~~~~~~~~~~~ $$
$$\Bigl( 2h_1^{-1}dh_1h_1^{-1}dh_1+ 2 dh_2h_2^{-1}dh_2h_2^{-1}+ h_1^{-1}dh_1dh_2h_2^{-1}+dh_2h_2^{-1}h_1^{-1}dh_1 \Bigl)_{\tau (5) \tau(6)} $$

$$+ (h_1 ^{-1}dh_1)_{\tau (1) \tau(2)}
\Bigl( 2h_1^{-1}dh_1h_1^{-1}dh_1+ 2 dh_2h_2^{-1}dh_2h_2^{-1} ~~~~~~~$$
$$+ h_1^{-1}dh_1dh_2h_2^{-1}+dh_2h_2^{-1}h_1^{-1}dh_1 \Bigl)_{\tau (3) \tau(4)}(dh_2 h_2^{-1}) _{\tau (5) \tau(6)}$$

$$+
\Bigl( 2h_1^{-1}dh_1h_1^{-1}dh_1+ 2 dh_2h_2^{-1}dh_2h_2^{-1} + h_1^{-1}dh_1dh_2h_2^{-1}+dh_2h_2^{-1}h_1^{-1}dh_1 \Bigl)_{\tau (1) \tau(2)}$$
$$~~~~~~~~~~~~~~~~~~~~~~~\cdot (h_1 ^{-1}dh_1)_{\tau (3) \tau(4)}(dh_2 h_2^{-1}) _{\tau (5) \tau(6)}$$

$$-(dh_2 h_2^{-1})_{\tau (1) \tau(2)} (h_1 ^{-1}dh_1) _{\tau (3) \tau(4)}\cdot
~~~~~~~~~~~~~~~~~~~~~~~~~~~~~~~~~~~~~~~~~~~~~~~ $$
$$\Bigl( 2h_1^{-1}dh_1h_1^{-1}dh_1+ 2 dh_2h_2^{-1}dh_2h_2^{-1}+ h_1^{-1}dh_1dh_2h_2^{-1}+dh_2h_2^{-1}h_1^{-1}dh_1 \Bigl)_{\tau (5) \tau(6)}$$

$$-(dh_2 h_2^{-1})_{\tau (1) \tau(2)} 
\Bigl( 2h_1^{-1}dh_1h_1^{-1}dh_1+ 2 dh_2h_2^{-1}dh_2h_2^{-1} ~~~~~~~~$$
$$+ h_1^{-1}dh_1dh_2h_2^{-1}+dh_2h_2^{-1}h_1^{-1}dh_1 \Bigl)_{\tau (3) \tau(4)}(h_1 ^{-1}dh_1) _{\tau (5) \tau(6)}$$

$$-
\Bigl( 2h_1^{-1}dh_1h_1^{-1}dh_1+ 2 dh_2h_2^{-1}dh_2h_2^{-1}+ h_1^{-1}dh_1dh_2h_2^{-1}+dh_2h_2^{-1}h_1^{-1}dh_1 \Bigl)_{\tau (1) \tau(2)} $$
$$~~~~~~~~~~~~~~~~~~~~~~~~~~~~~~~\cdot (dh_2 h_2^{-1})_{\tau (3) \tau(4)} (h_1 ^{-1}dh_1) _{\tau (5) \tau(6)}\biggl).$$

$$  E_{3,3} = \frac{1}{2^6 \cdot 6^2 \pi ^3} \sum_{\tau \in \mathfrak{S} _{6}}   {\rm sgn}(\tau) \cdot~~~~~~~~~~~~~~~~~~~~~~~~~~~~~~~~~~~~~~~~~~~~~~~$$
$$\biggl((h_1 ^{-1}dh_1)_{\tau (1) \tau(2)}(dh_2 h_2^{-1}) _{\tau (3) \tau(4)}
(h_2dh_3h_3^{-1}h_2^{-1})_{\tau (5) \tau(6)}~~~~~~~~~~~ $$
$$-(dh_2 h_2^{-1})_{\tau (1) \tau(2)}(h_1 ^{-1}dh_1) _{\tau (3) \tau(4)}
(h_2dh_3h_3^{-1}h_2^{-1})_{\tau (5) \tau(6)}~~~~~~~$$
$$-(h_1 ^{-1}dh_1)_{\tau (1) \tau(2)}(h_2dh_3h_3^{-1}h_2^{-1}) _{\tau (3) \tau(4)}
(dh_2 h_2^{-1})_{\tau (5) \tau(6)}~~~  $$
$$ +(h_2dh_3h_3^{-1}h_2^{-1})_{\tau (1) \tau(2)}(h_1 ^{-1}dh_1) _{\tau (3) \tau(4)}
(dh_2 h_2^{-1})_{\tau (5) \tau(6)} $$
$$~~~+(dh_2 h_2^{-1})_{\tau (1) \tau(2)}(h_2dh_3h_3^{-1}h_2^{-1}) _{\tau (3) \tau(4)}
(h_1 ^{-1}dh_1)_{\tau (5) \tau(6)}  $$
$$~~~~~~~-(h_2dh_3h_3^{-1}h_2^{-1})_{\tau (1) \tau(2)}(dh_2 h_2^{-1}) _{\tau (3) \tau(4)}
(h_1 ^{-1}dh_1)_{\tau (5) \tau(6)}\biggl).$$

\end{theorem}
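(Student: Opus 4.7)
The plan is to specialize Theorem 3.3 to $p=3$ and read off the three nonzero components at $q=2$, $q=1$, $q=0$, which are $E_{1,5}$, $E_{2,4}$, $E_{3,3}$ respectively. Since Theorem 3.3 already asserts that $\sum_{q=0}^{p-1}E_{p-q,p+q}$ is a cocycle in the total complex $\Omega^{*}(NG)$ representing the Euler class of $ESO(6)\to BSO(6)$, the statement reduces to three explicit computations verifying that the displayed formulas agree with the general expression (and in particular that the resulting double-complex diagram commutes as drawn).

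For $E_{1,5}$ (the case $q=2$, $p-q=1$) only the index pair $(i,j)=(1,2)$ is available, so the only $R_{ij}$ is $R_{12}=\varphi_1^{2}=(h^{-1}dh)^{2}$; with one $\varphi$ and two $R$'s the three arrangements $RR\varphi$, $R\varphi R$, $\varphi RR$ exhaust the enumeration. This is precisely Proposition 3.1 with $p=3$, and the integral $\int_0^1(t_0t_1)^2\,dt_1=1/({}_{5}C_{2}\cdot 3)=1/30$ together with the sign $(-1)^{3}$ reproduces the coefficient $-1/(2^{6}\cdot 180\,\pi^{3})$. For $E_{3,3}$ (the case $q=0$, $p-q=3$) no $R_{ij}$ occurs; one simply has three $\varphi_{s}$'s permuted by $\sigma\in\mathfrak{S}_{3}$, the integral $\int_{\Delta^{3}}dt_{1}\,dt_{2}\,dt_{3}=1/6$, and the sign $(-1)^{6}=+1$. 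This is Proposition 3.2 with $p=3$ and directly produces the six signed summands.

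The substantive step is the middle piece $E_{2,4}$ (the case $q=1$, $p-q=2$): there are two $\varphi_{s}$'s permuted by $\sigma\in\mathfrak{S}_{2}$, a single $R_{ij}$ chosen among $R_{12}=\varphi_{1}^{2}$, $R_{23}=\varphi_{2}^{2}$, and $R_{13}=(\varphi_{1}+\varphi_{2})^{2}$, and three slots for the $R$ (before, between, or after $\varphi_{\sigma(1)}$ and $\varphi_{\sigma(2)}$). A direct calculation on $\Delta^{2}$ shows that the three integrals $\int_{\Delta^{2}}t_{i-1}t_{j-1}\,dt_{1}\,dt_{2}$ are all equal to $1/24$ by the symmetry of the simplex under permutation of the $t_{i}$, so the contribution of each slot collapses to $R_{12}+R_{13}+R_{23}=2\varphi_{1}^{2}+2\varphi_{2}^{2}+\varphi_{1}\varphi_{2}+\varphi_{2}\varphi_{1}$. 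With $\varphi_{1}=h_{1}^{-1}dh_{1}$ and $\varphi_{2}=dh_{2}h_{2}^{-1}$ this is exactly the bracketed polynomial appearing in each of the six lines of the claimed formula, and the overall sign $(-1)^{3+1}=+1$ combined with the integral $1/24$ reproduces the prefactor $1/(2^{6}\cdot 6\cdot 4!\,\pi^{3})$.

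The main obstacle is the combinatorial bookkeeping in $E_{2,4}$: one must verify that the six pairs (slot, $\sigma$) correspond, with their relative signs $\mathrm{sgn}(\sigma)$, to the six bracketed groups in the stated expression, and that no cancellations have been missed when the $R_{13}$ contribution is expanded. Once this identification is made the two expressions agree term-by-term, and Theorem 3.3 supplies both the cocycle property (equivalently, the commutativity of the displayed diagram) and the fact that $E_{1,5}+E_{2,4}+E_{3,3}$ represents the Euler class.
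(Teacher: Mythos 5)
Your proposal follows exactly the paper's (implicit) route: Theorem 3.4 is given without any written proof, being the specialization of Theorem 3.3 (equivalently Propositions 3.1 and 3.2) to $p=3$, and your coefficient checks ($\int_0^1(t_0t_1)^2dt_1=1/30$, $\int_{\Delta^2}t_{i-1}t_{j-1}=1/24$, $\int_{\Delta^3}=1/6$, with signs $(-1)^3$, $(-1)^4$, $(-1)^6$) and the identification of $R_{12}+R_{13}+R_{23}$ with the bracketed quartic in $E_{2,4}$ are all correct. The one caveat is that your (correct) enumeration $RR\varphi$, $R\varphi R$, $\varphi RR$ for $E_{1,5}$ produces two squared factors and one unsquared factor per term (a $5$-form, as it must be), whereas the paper's displayed $E_{1,5}$ shows only one squared factor per term (a degree-$4$ expression); this appears to be a typo in the paper rather than a gap in your argument, but you should not claim literal agreement with the printed formula without noting it.
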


\section{The cocycle in a local truncated complex}

We recall the filtered local simplicial de Rham complex due to Brylinski \cite{Bry}.

\begin{definition}[\cite{Bry}]
The filtered local simplicial de Rham complex $F^p\Omega_{{\rm loc}} ^{*,*}(NG)$ over a simplicial manifold $NG$ is
defined as follows:\\
$$
F^p \Omega^{r,s} _{\rm loc}(NG)=\begin{cases}
 \underrightarrow{\rm lim}_{1 \in V \subset G^r} ~~\Omega ^s(V)  & ~{\rm if}~ s \ge p \\
0  &  {\rm otherwise}. 
\end{cases}
$$
\end{definition}

Let $F^p \Omega^{*}(NG)$ be a filtered complex 
$$
F^p \Omega^{r,s} (NG)=\begin{cases}
 \Omega ^s(NG(r))  & ~{\rm if}~ s \ge p \\
0  &  {\rm otherwise} 
\end{cases}
$$
and $[\sigma_{<p}\Omega ^{*}({NG})]$ a truncated complex
$$
[\sigma_{<p}\Omega ^{r,s}({NG})]=\begin{cases}
0  &  ~{\rm if}~ s \ge p  \\
\Omega ^s(NG(r))  & ~{\rm otherwise}. 
\end{cases}
$$
Then there is an exact sequence: 
$$0 \rightarrow F^p \Omega^{*}(NG) \rightarrow \Omega^{*}(NG) \rightarrow [\sigma_{<p}\Omega^{*}({NG}) ] \rightarrow 0$$
which induces a boundary map $\beta:H^l(NG,[\sigma_{<p}\Omega_{\rm loc} ^{*}]) \rightarrow H^{l+1}(NG,[F^p \Omega^{*} _{\rm loc}]).$

Let $\mu_1 + \cdots + \mu_p$, $\mu_{p-q} \in \Omega^{p+q}(NG(p-q))$
be a cocycle in $\Omega^{2p}(NG)$. Using this cocycle, we can construct a cocycle $\eta$ in $[\sigma_{<p}\Omega ^{*} _{\rm loc}({NG})]$ in the following way.

We take a contractible open set $U \subset G$ containing $1$. Using the same argument in \cite{Dup2}, we can 
construct mappings $\{ \sigma_l:{ \Delta }^l\times  U^l \rightarrow U \}_{0 \le l}$ inductively with the following properties:\\
(1)~$\sigma_0(pt)=1$;\\
(2)
$$
{\sigma}_l({\varepsilon}^{j}(t_0, \cdots , t_{l-1});h_1, \cdots, h_l)=\begin{cases}
{\sigma}_{l-1}( t_0, \cdots, t_{l-1};{\varepsilon}_{j}(h_1, \cdots , h_l))   &  ~{\rm if}~j \ge 1 \\
h_1 \cdot {\sigma}_{l-1}(t_0, \cdots, t_{l-1};h_2, \cdots , h_l)   &  ~{\rm if}~j=0.
\end{cases}
$$
We define mappings $\{f_{m,q}: { \Delta }^q \times  U^{m+q-1} \rightarrow G^m \}$ as
$$f_{m,q}(t_0, \cdots , t_q;h_1, \cdots, h_{m+q-1}):=(h_1, \cdots , h_{m-1}, {\sigma}_q(t_0, \cdots , t_q;h_m, \cdots , h_{m+q-1})).$$
 A $(2p-m-q)$-form $\beta_{m,q}$ on $U^{m+q-1}$ is defined as $\beta_{m,q}=(-1)^m \int_{{ \Delta }^q}f_{m,q} ^* {\mu}_m$.
Then we define the cochain $\eta$ as the sum of following $\eta_l$ on $U^{2p-1-l}$ for $0 \le l \le p-1$:
$$\eta _l:=\sum_{m+q=2p-l,~ p \ge m \ge 1}\beta _{m,q}.$$

\begin{theorem}[\cite{Bry}\cite{Suz}]
$\eta := \eta _0+ \cdots + \eta_{p-1}$ is a cocycle in $[\sigma_{<p}\Omega_{\rm loc} ^{*}(NG)]$ whose cohomology class is mapped 
to $[\mu_1 + \cdots + \mu_p]$ in $H^{2p}(NG,[F^p \Omega^{*} _{\rm loc}])$ by a boundary map 
$\beta : H^{2p-1}(NG,[\sigma_{<p}\Omega_{\rm loc} ^{*}]) \rightarrow H^{2p}(NG,[F^p \Omega^{*} _{\rm loc}])$.
\end{theorem}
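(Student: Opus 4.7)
The plan is to reduce both assertions — that $\eta$ is a cocycle in $[\sigma_{<p}\Omega^{*}_{\rm loc}(NG)]$ and that $\beta[\eta]=[\mu]$ — to the single identity $D\widetilde\eta = \mu_{1}+\cdots+\mu_{p}$ for an explicit lift $\widetilde\eta\in\Omega^{*}_{\rm loc}(NG)$ of $\eta$. The lift is produced by the same formula as in the statement, but now allowing the index $l$ to range over all of $0\le l\le 2p-1$: put $\widetilde\eta:=\sum_{l=0}^{2p-1}\eta_{l}$. For $l\ge p$ the extra summand has form degree $l\ge p$ and lies in $F^{p}\Omega^{*}_{\rm loc}$, so $\widetilde\eta$ projects to the original $\eta$ in the quotient. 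From $D\widetilde\eta=\mu\in F^{p}\Omega^{*}_{\rm loc}$ we immediately get $D\eta=0$ in $[\sigma_{<p}\Omega^{*}_{\rm loc}]$, and by the very definition of the connecting homomorphism attached to the short exact sequence $0\to F^{p}\Omega^{*}_{\rm loc}\to\Omega^{*}_{\rm loc}\to[\sigma_{<p}\Omega^{*}_{\rm loc}]\to 0$, we conclude $\beta[\eta]=[D\widetilde\eta]=[\mu]$ in $H^{2p}(NG,[F^{p}\Omega^{*}_{\rm loc}])$.

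The proof of $D\widetilde\eta=\mu$ is a direct computation of $D\beta_{m,q}=d'\beta_{m,q}+d''\beta_{m,q}$ for each $(m,q)$. For the $d''$ part, the fibre Stokes formula applied to $\beta_{m,q}=(-1)^{m}\int_{\Delta^{q}}f_{m,q}^{*}\mu_{m}$ decomposes $d''\beta_{m,q}$ into a bulk integral $\pm\int_{\Delta^{q}}f_{m,q}^{*}d\mu_{m}$ and a boundary integral $\sum_{j=0}^{q}(\pm)\int_{\Delta^{q-1}}(f_{m,q}\circ(\varepsilon^{j}\times\mathrm{id}))^{*}\mu_{m}$. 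The bulk is rewritten by means of the cocycle identity $d''\mu_{m}=-d'\mu_{m-1}$ for $\mu$ as a signed sum over $\varepsilon_{i}^{*}\mu_{m-1}$. The boundary is simplified by property (2) of $\sigma_{l}$: the faces $j=1,\dots,q$ produce $f_{m,q-1}\circ(\mathrm{id}\times\varepsilon_{m+j-1})$, while the distinguished face $j=0$ produces $\varepsilon_{m}\circ f_{m+1,q-1}$, implementing the shift $(m,q)\mapsto(m+1,q-1)$. Meanwhile, $d'\beta_{m,q}=\sum_{i=0}^{m+q}(-1)^{i}\varepsilon_{i}^{*}\beta_{m,q}$ is analysed by tracing each $\varepsilon_{i}$ on $U^{m+q}$ through the definition of $f_{m,q}$, which expresses it again in terms of integrals of $\mu_{m}$.

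Assembling these contributions in $D\widetilde\eta$, the interior boundary faces of $d''\beta_{m,q}$ cancel against the middle faces of $d'\beta_{m,q}$, and the bulk term (after the cocycle translation) cancels against the outer faces of $d'\beta_{m,q}$ at the same $(m,q)$. The distinguished $j=0$ boundary terms, of the form $f_{m+1,q-1}^{*}\varepsilon_{m}^{*}\mu_{m}$, cancel against matching pieces of the translated bulk of $d''\beta_{m+1,q-1}$ after the index shift $(m,q)\mapsto(m+1,q-1)$. The full double sum telescopes, and by property (1) — namely $\sigma_{0}(\mathrm{pt})=1$ — what survives at the endpoints is precisely $\mu_{m}$ for each $m=1,\dots,p$, giving the desired $\mu_{1}+\cdots+\mu_{p}$.

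The main obstacle is not conceptual but combinatorial: reconciling signs arising from the convention $d''=(-1)^{\text{simplicial degree}}d$, from the $(-1)^{m}$ prefactor in $\beta_{m,q}$, from the oriented boundary of $\Delta^{q}$ in the fibre Stokes formula, and from the alternating sums defining $d'$ and $d'\mu_{m-1}$, and then verifying term by term that every intermediate contribution cancels while the surviving pieces reassemble into $+(\mu_{1}+\cdots+\mu_{p})$. Organising the computation bidegree by bidegree in the total complex and separating the three kinds of contributions (bulk, interior boundary faces, distinguished $j=0$ boundary) makes this tractable. The argument is essentially the one carried out by Brylinski \cite{Bry} and by the author in \cite{Suz} in closely related contexts, to which the present theorem reduces by formal adaptation.
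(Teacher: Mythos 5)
Your outline is correct and is essentially the argument the paper itself defers to: the paper's ``proof'' of this theorem is only the citation ``See \cite{Suz}'', and the argument there proceeds exactly as you describe --- extend $\eta$ by the terms $\beta_{m,q}$ of form degree $\ge p$ to a lift $\widetilde{\eta}$ in $\Omega^{*}_{\rm loc}(NG)$, establish $D\widetilde{\eta}=\mu_1+\cdots+\mu_p$ via Stokes' theorem on $\Delta^q$ combined with property (2) of the maps $\sigma_l$ (your identifications $f_{m,q}\circ(\varepsilon^j\times\mathrm{id})=f_{m,q-1}\circ(\mathrm{id}\times\varepsilon_{m+j-1})$ for $j\ge 1$ and $f_{m,q}\circ(\varepsilon^0\times\mathrm{id})=\varepsilon_m\circ f_{m+1,q-1}$ are the right ones) and the cocycle identity $d''\mu_m=-d'\mu_{m-1}$, and then read off both assertions from the definition of the connecting homomorphism. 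The only caveat is that you have described rather than executed the telescoping sign bookkeeping, so what you have is a correct proof scheme whose remaining content is the term-by-term verification you yourself identify.
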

\begin{proof}
See \cite{Suz}.
\end{proof}

\section{Construction of a Lie algebra cocycle}
For any Lie group $G$, let $C^{\infty} _{loc}(G^p, {\mathbb R})$ denote the group of germs at $(1, \cdots, 1)$ of smooth functions $G^p \rightarrow {\mathbb R}$ and  $H_{loc} ^p(G, {\mathbb R})$ denote the cohomology group of the following complex:
$$ \cdots \rightarrow C^{\infty} _{loc}(G^p, {\mathbb R}) \xrightarrow{\delta := \sum _{i=0} ^{p+1} (-1)^{i} {\varepsilon}_{i} ^{*}}  C^{\infty} _{loc}(G^{p+1}, {\mathbb R}) \rightarrow \cdots $$

Brylinski constructed a natural cochain map $\phi:C^p_{loc}(G, {\mathbb R}) \rightarrow C^p(\mathcal{G}, {\mathbb R})$ as follows:
$$\phi(c)( \xi_1, \cdots , \xi_p):=~~~~~~~~~~~~~~~~~~~~~~~~~~~~~~~~~~~~~~~~~~~~~~~~~~~~~~~~~~~~~~~$$
$$[\frac{ \partial ^p}{\partial y_1 \cdots \partial y_p} \sum_{\rho \in \mathfrak{S} _{p}} {\rm sgn} (\rho)c( \exp (y_{\rho(1)} \xi_{\rho(1)}), \cdots ,\exp (y_{\rho(p)} \xi_{\rho(p)}))]_{y_i=0}$$
where $C^p(\mathcal{G}, {\mathbb R})$ is the space of smooth alternating multilinear maps $\mathcal{G} \rightarrow {\mathbb R}$
and $\xi_i \in \mathcal{G}$.
For example, if we take $\delta c \in C^{\infty} _{loc}(G^2, {\mathbb R})$ and set $X_{\rho(i)}:=\exp (y_{\rho(i)} \xi_{\rho(i)})$ then 
$$\phi(\delta c)( \xi_1, \xi_2 )=[\frac{ \partial ^2}{\partial y_1  \partial y_2} \sum_{\rho \in \mathfrak{S} _{2}} {\rm sgn} (\rho) (\delta c)( X_{\rho(1)}, X_{\rho(2)})]_{y_i=0}$$
$$=[\frac{ \partial ^2}{\partial y_1  \partial y_2} \sum_{\rho \in \mathfrak{S} _{2}} {\rm sgn} (\rho)
  (c(X_{\rho(2)})-c( X_{\rho(1)} X_{\rho(2)} ) + c(X_{\rho(1)})]_{y_i=0})$$
$$=[\frac{ \partial ^2}{\partial y_1  \partial y_2}( - c( X_1 X_2 -X_2X_1)) ]_{y_i=0}=(d(\phi(c)))(\xi_1, \xi_2).$$

Let $LU$ be the free loop space of a contractible open set $U \subset SO(4)$ containing $1$ and ${\rm ev}:LU \times  S^1 \to U$ be the evaluation map, i.e. for $\gamma \in LU$ and $\theta \in S^1$, ${\rm ev}(\gamma , \theta)$ is defined as $\gamma (\theta)$. Then $\int _{S^1} {\rm ev}^*$ maps $\eta_{1} \in \Omega ^1(U^{2})$ to a cochain in  $\Omega ^0(LU^{2})$.
This cochain defines a cohomology class in local cohomology group $H^{2} _{\rm loc}(LSO(4), {\mathbb R})$. 
So as an application of Theorem 3.2, we can obtain a cocycle in  $\phi  (\int _{S^1} {\rm ev}^* \eta_1) \in C^2(L\mathfrak{so}(4), {\mathbb R})$.

Now we compute this cocycle. We define:
$$a:=\int _{S^1} {\rm ev}^*\int_{\Delta^2}f_{1,2}^*E_{1,3},~~~b:=\int _{S^1} {\rm ev}^*\int_{\Delta^1}f_{2,1}^*E_{2,2},~~~c:=\int _{S^1} {\rm ev}^* \eta_1$$
then $c(\gamma_1,\gamma_2)=a(\gamma_1,\gamma_2)+b(\gamma_1,\gamma_2)$ for $\gamma_1, \gamma_2 \in LU$.
Recall that 
$$ f_{1,2}(t_0, t_1, t_2;\gamma_1(\theta), \gamma_2(\theta))=\sigma_2(t_0, t_1, t_2;\gamma_1(\theta), \gamma_2(\theta))$$
$$f_{2,1}(t_0, t_1, t_2;\gamma_1(\theta), \gamma_2(\theta))=(\gamma_1(\theta),\sigma_1( t_0, t_1;\gamma_2(\theta))).$$
In this case we can take:
$$\gamma_i(\theta)=\exp(y_i \xi_i(\theta))$$
$$\sigma_1 (t_0,t_1; \exp(y_2\xi_2(\theta))):=\exp(t_1y_2\xi_2(\theta))$$
$$\sigma_2 (t_0,t_1,t_2; \exp (y_1\xi_1(\theta)),\exp (y_2\xi_2(\theta))):=\exp ((1-t_0)y_1\xi_1(\theta))\exp (t_2y_2\xi_2(\theta))$$
where $\xi_i \in L\mathfrak{so}(4)$.
By observing the coefficient of $y_1y_2$, we see  $\phi(a(\gamma_1,\gamma_2))=0$.

We define a map  $\beta_{\gamma_1, \gamma_2}:S^1 \times \Delta^1 \rightarrow SO(4) \times SO(4)$ as follows:
$$\beta_{\gamma_1, \gamma_2}(\theta; t_0, t_1) :=(\gamma_1(\theta),\sigma_1( t_0, t_1;\gamma_2(\theta))).$$

Then $b(\gamma_1,\gamma_2)=\int_{S^1 \times \Delta^1} \beta_{\gamma_1, \gamma_2} ^*E_{2,2}$ and up to $O(|y_1|^2)$ and $O(|y_2|^2)$,
$$\frac{\partial \beta_{\gamma_1, \gamma_2}}{\partial \theta}=\left(y_1\frac{\partial\xi_1(\theta)}{\partial\theta},t_1y_2\frac{\partial\xi_2(\theta)}{\partial\theta} \right),
~~~~\frac{\partial \beta_{\gamma_1, \gamma_2}}{\partial t_1}=\left(0, y_2\xi_2(\theta)\right).$$
Therefore
$$[\frac{ \partial ^2}{\partial y_1  \partial y_2}b(\gamma_1,\gamma_2)]_{y_i=0}=  \frac{-1}{128 \pi ^2} \sum_{\tau \in \mathfrak{S} _{4}}   {\rm sgn} (\tau) \int^1_0 \left( \frac{\partial\xi_1(\theta)}{\partial\theta} \right)_{\tau(1)\tau(2)} \xi_2(\theta)_{\tau(3)\tau(4)}d\theta.$$
Now we obtain the following theorem.
\begin{theorem}
There exists a Lie algebra $2$-cocycle $\alpha$ on $L\mathfrak{so}(4)$ which is expressed as follows:
$$\alpha(\xi_1,\xi_2):=\frac{-1}{128 \pi ^2} \sum_{\tau \in \mathfrak{S} _{4}} \biggl( {\rm sgn} (\tau) \cdot ~~~~~~~~~~~~~~~~~~~~~~~~~~~~~~~~~~~~~~~~~~~~~ $$
$$ \int^1_0 \Bigl( \left( \frac{\partial\xi_1(\theta)}{\partial\theta} \right)_{\tau(1)\tau(2)} \xi_2(\theta)_{\tau(3)\tau(4)} -\left( \frac{\partial\xi_2(\theta)}{\partial\theta} \right)_{\tau(1)\tau(2)} \xi_1(\theta)_{\tau(3)\tau(4)} \Bigl) d\theta \biggl).$$
\end{theorem}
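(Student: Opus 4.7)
The strategy is to apply Brylinski's construction (Theorem 4.1) to the Euler cocycle $E_{1,3} + E_{2,2}$ produced by Theorem 3.3 for $G = SO(4)$, to transport the resulting local group 2-cocycle to the free loop group $LSO(4)$ by integration over $S^1$, and then to apply the chain map $\phi$ to land in Lie algebra cohomology. Since $\phi$ is a chain map and the input is a cocycle, the cocycle property of $\alpha$ is automatic; the content of the theorem is the explicit formula, whose verification reduces to a direct pullback computation.

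Concretely, Theorem 4.1 produces $\eta = \eta_0 + \eta_1$ with $\eta_1 \in \Omega^1(U^2)$ built from the two pieces $f_{1,2}^* E_{1,3}$ and $f_{2,1}^* E_{2,2}$. Setting $c := \int_{S^1} {\rm ev}^* \eta_1$ gives a local group 2-cocycle on $LSO(4)$, and $\alpha := \phi(c)$ is then a Lie algebra 2-cocycle on $L\mathfrak{so}(4)$. Decomposing $c = a + b$ as in the text, it suffices to compute $\phi(a)$ and $\phi(b)$ on $(\xi_1, \xi_2)$ separately.

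For $\phi(a)$, I would plug in the explicit section $\sigma_2(t_0,t_1,t_2;e^{y_1\xi_1},e^{y_2\xi_2}) = e^{(1-t_0)y_1\xi_1}e^{t_2 y_2 \xi_2}$ and observe that the Maurer--Cartan form of this element is $O(y_1) + O(y_2)$. Since $E_{1,3}$ is cubic in $h^{-1}dh$, each summand is $O(|y|^3)$, so the mixed partial $\partial^2/(\partial y_1 \partial y_2)$ at $y_i = 0$ vanishes, giving $\phi(a) = 0$. For $\phi(b)$, I would substitute $\beta_{\gamma_1,\gamma_2}(\theta;t_0,t_1) = (\gamma_1(\theta), \exp(t_1 y_2 \xi_2(\theta)))$ into $E_{2,2}$ and linearize the two Maurer--Cartan forms to first order: $h_1^{-1}dh_1 = y_1 \xi_1'(\theta)\, d\theta + O(y_1^2)$ and $dh_2 h_2^{-1} = y_2 \xi_2(\theta)\, dt_1 + t_1 y_2 \xi_2'(\theta)\, d\theta + O(y_2^2)$. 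Taking the wedge product, extracting the $y_1 y_2 \, d\theta \wedge dt_1$ coefficient, and integrating over $\Delta^1$ and $S^1$ yields the displayed formula for $[\partial^2_{y_1 y_2} b(\gamma_1,\gamma_2)]$. The antisymmetrization in the definition of $\phi$ (the sum over $\rho \in \mathfrak{S}_2$ weighted by ${\rm sgn}(\rho)$) then adds the $\xi_1 \leftrightarrow \xi_2$ term with the opposite sign, producing the second summand in the stated formula for $\alpha$.

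The main obstacle is the combinatorial bookkeeping in the pullback computation of $\beta_{\gamma_1,\gamma_2}^* E_{2,2}$: one must track the wedge-anticommutativity of $d\theta$ and $dt_1$, the relabeling symmetry $\tau \mapsto \tau \circ (1\,3)(2\,4)$ of the $\mathfrak{S}_4$-sum that allows the two summands of $E_{2,2}$ to be combined, and the trivial evaluation $\int_0^1 dt_1 = 1$. These must combine precisely to transform the prefactor $-1/(64\pi^2)$ of $E_{2,2}$ into the $-1/(128\pi^2)$ in the final formula. Once this numerical check is in place, the cocycle property of $\alpha$ follows at once from the cocycle property of $c$, which is inherited from $\eta$ via the chain map $\int_{S^1} {\rm ev}^*$.
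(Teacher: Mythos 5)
Your proposal follows the paper's own argument essentially step for step: decompose $c=a+b$, kill $\phi(a)$ by observing that the relevant expression has no $y_1y_2$-coefficient, linearize the Maurer--Cartan forms in the pullback $\beta_{\gamma_1,\gamma_2}^*E_{2,2}$ to extract $[\partial^2_{y_1y_2}b]_{y_i=0}$, and let the antisymmetrization in $\phi$ supply the $\xi_1\leftrightarrow\xi_2$ term. This matches the paper's proof, including the numerical passage from $-1/(64\pi^2)$ to $-1/(128\pi^2)$, so no further comment is needed.
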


National Institute of the Technology, Akita College, 1-1, Iijima Bunkyo-cho, Akita-shi, Akita-ken, Japan. \\
e-mail: nysuzuki@akita-nct.ac.jp
\end{document}